\newtheorem{theorem}{Theorem}[section]
\newtheorem{corollary}[theorem]{Corollary}
\newtheorem{lemma}[theorem]{Lemma}
\newtheorem{proposition}[theorem]{Proposition}
\newtheorem{remark}[theorem]{Remark}
\newtheorem{example}[theorem]{Example}
\newenvironment{proof}{\begin{trivlist}\item[]{\it
Proof.}}{\hfill$\square$\end{trivlist}}
\def\field{k}
\def\mn{{\mathbb{N}}}
\def\gl{{\mathrm{Gl}}}
\def\rep{{\mathcal{R}}}
\def\mz{{\mathbb{Z}}}
\def\sstheta{{\theta-{\mathrm{ss}}}}
\def\stheta{{\theta-{\mathrm{s}}}}
\def\ssvntheta{{\theta^{v,n}-{\mathrm{ss}}}}
\def\moduli{{\mathcal{M}}}
\def\stab{{\mathrm{Stab}}}
\begin{document}
\title{On singularities of quiver moduli} 
\author{M\'aty\'as Domokos
\thanks{Partially supported by 
OTKA NK72523 and K61116. } 
\\ 
\\
{\small R\'enyi Institute of Mathematics, Hungarian Academy of 
Sciences,} 
\\ {\small P.O. Box 127, 1364 Budapest, Hungary,} {\small E-mail: domokos@renyi.hu } 
}
\date{}
\maketitle 
\begin{abstract}  
Any moduli space of representations of a quiver (possibly with oriented cycles) has an embedding as a dense open subvariety into a moduli space of representations of a bipartite quiver having the same type of singularities. A connected quiver is Dynkin or extended Dynkin 
if and only if all moduli spaces of its representations are smooth. 
\end{abstract}

\noindent MSC: 16G20, 14L24


\section{Introduction}

A quiver $Q$ is a finite directed graph with vertex set $Q_0$ and arrow set $Q_1$. For an arrow $a\in Q_1$ write $a_-\in Q_0$ for its starting vertex, and 
$a_+$ for its terminating vertex (multiple arrows, oriented cycles, loops are 
allowed). Let $\field$ be an algebraically closed field of arbitrary characteristic. Take a dimension vector $\alpha\in\mn_0^{Q_0}$ (here $\mn_0$ stands for the set of non-negative integers). The space of $\alpha$-dimensional representations of $Q$ is defined as 
$\rep(Q,\alpha):=\bigoplus_{a\in Q_1}\field^{\alpha(a_+)\times \alpha(a_-)}$, 
so $x\in \rep(Q,\alpha)$ assigns an $\alpha(a_+)\times \alpha(a_-)$ matrix $x(a)$ to each arrow $a\in Q_1$. 
For an element $g=(g(i)\mid i\in Q_0)$ in the product $\gl(\alpha):=\prod_{i\in Q_0}\gl_{\alpha(i)}(\field)$ of general linear groups  
and $x\in \rep(Q,\alpha)$ define $g\cdot x \in \rep(Q,\alpha)$ 
by the rule $(g\cdot x)(a):=g(a_+)x(a)g(a_-)^{-1}$ (matrix multiplication). 
This is a linear action of $\gl(\alpha)$ on $\rep(Q,\alpha)$, such that the orbits are in a natural bijection with the isomorphism classes of $\alpha$-dimensional representations of $Q$ (see for example \cite{kraft-riedtmann} for the concept of the category of representations of $Q$). 
By a {\it weight} we mean an integral vector $\theta\in\mz^{Q_0}$; 
a {relative invariant of weight} $\theta$ is a polynomial function 
$f$ on $\rep(Q,\alpha)$ satisfying the property 
$f(g\cdot x)=\prod_{i\in Q_0}\det(g(i))^{\theta(i)}f(x)$ 
for all $g\in \gl(\alpha)$ and $x\in \rep(Q,\alpha)$. 
A point $x\in\rep(Q,\alpha)$ is $\theta$-semistable if there exists a 
relative invariant $f$ whose weight is a positive rational multiple of $\theta$ and $f(x)$ is non-zero. The $\theta$-semistable points constitute a  Zariski open (possibly empty) subset $\rep(Q,\alpha)^\sstheta$ in $\rep(Q,\alpha)$. 
A $\theta$-semistable point is $\theta$-stable if its stabilizer is $\field^{\times}$, they consitute an open subset $\rep(Q,\alpha)^\stheta$ in $\rep(Q,\alpha)^\sstheta$. 
In \cite{king}, Geometric Invariant Theory is applied to construct a morphism 
$\pi(Q,\alpha,\theta):\rep(Q,\alpha)^\sstheta\to \moduli(Q,\alpha,\theta)$ onto a quasiprojective algebraic variety $\moduli(Q,\alpha,\theta)$, which is a coarse moduli space for $\alpha$-dimensional $\theta$-semistable representations of $Q$ up to S-equivalence (consult \cite{king}, \cite{newstead} for the terminology). Moreover, $\moduli(Q,\alpha,\theta)$ contains a (possibly empty)  open subset $\moduli^s(Q,\alpha,\theta)$ which is a coarse moduli space for $\alpha$-dimensional $\theta$-stable representations up to isomorphism. 
Note also that the notion of semistability (stability) and the 
associated moduli spaces depend only on the equivalence class of $\theta$, 
where two weights are said to be equivalent if one is a positive rational multiple of the other. 

It is known that the moduli spaces $\moduli(Q,\alpha,\theta)$ are singular in general, see for example the introduction of \cite{engel-reineke}, or the  analysis of the generalized Kronecker quiver in \cite{adriaenssens-lebruyn}. 
One possible way to give this vague statement a concrete form is provided by our Theorem~\ref{thm:main}, pointing out that Dynkin or extended Dynkin quivers are characterized by the property that all their moduli spaces are smooth (in fact they are all affine or projective spaces). 

In the representation theory of quivers, the classical distinction of the classes of Dynkin (resp. extended Dynkin) quivers is based on the fact that they have finite (resp. tame) representation type, whereas all the remaining quivers have wild representation type. More recent works showed that exactly these classes are selected when one inquires about good algebraic or combinatorial 
properties of associated objects. It is shown in \cite{skowronski-weyman} 
that the (extended) Dynkin quivers are exactly those quivers that have the property that all their algebras of semi-invariants are complete intersections.  These quivers are characterized in \cite{chindris} in terms of their weight semi-groups. It is quite natural to inquire about a characterization of extended Dynkin quivers by good geometric properties of their moduli spaces; 
Theorem~\ref{thm:main} provides the simple answer. 

In much of the literature on moduli spaces of quivers the authors 
require that the quiver has no oriented cycles. We show in Section~\ref{sec:doubling} that a moduli space attached to an arbitrary quiver can be embedded as a dense open subvariety into a moduli space of a bipartite quiver, such that this larger moduli space has the same type of singularities as the original one. Thus studying certain questions on these moduli spaces, one may reduce to the case when $Q$ has no oriented cycles (or even to the case when $Q$ is bipartite). 
Recall that if the quiver $Q$ has no oriented cycles, then $\moduli(Q,\alpha,\theta)$ is a projective variety. This is not true for quivers containing oriented cycles.
So one may think of this process as a compactification of the original moduli space, and it is notable that such compactification is possible without adding new type of singularities. 

Sections~\ref{sec:doubling} and \ref{sec:smooth} are essentially independent 
(though the idea of Theorem~\ref{thm:embedding} is used to allow quivers with oriented cycles in the statements of Section~\ref{sec:smooth}). 


\section{The effect on moduli spaces of doubling a vertex}  \label{sec:doubling}

Pick a vertex $v\in Q_0$ and construct a new quiver $Q^v$ as follows: 
replace the vertex $v$ of $Q$ by two new vertices $v_-$ and $v_+$, and keep all the other vertices. For each arrow $a\in Q_1$ draw an arrow $a^v\in Q^v_1$ 
with the same endpoints as $a$, except that $a^v_-=v_-$ when $a_-=v$, 
and $a^v_+=v_+$ when $a_+=v$ (in particular, if $a$ is a loop at $v$, then 
$a^v$ is an arrow from $v_-$ to $v_+$). Moreover, $Q_1^v$ has an extra arrow $e$ from $v_-$ to $v_+$. 
If $\alpha\in\mn_0^{Q_0}$ is a dimension vector, 
then denote $\alpha^v$ the dimension vector for $Q^v$ with 
$\alpha^v(i)=\alpha(i)$ for all $i\in Q_0\setminus \{v\}$, 
and $\alpha^v(v_-)=\alpha(v)=\alpha^v(v_+)$. 
For a weight $\theta\in\mz^{Q_0}$ and a non-negative integer $n$, denote by 
$\theta^{v,n}$ the weight for $Q^v$ defined by 
$\theta^{v,n}(v_-)=-n$, $\theta^{v,n}(v_+)=\theta(v)+n$, and 
$\theta^{v,n}(i)=\theta(i)$ for all $i\in Q_0\setminus \{v\}$.  
Let $\iota:\rep(Q,\alpha)\to\rep(Q^v,\alpha^v)$ be the morphism with 
$\iota(x)(a^v)=x(a)$ for $a\in Q_1$, and $\iota(x)(e)=I$ (the $\alpha(v)\times\alpha(v)$ identity matrix). 
We state first a variant (taking care of weights) of Theorem 3.2 in \cite{domokos-zubkov2001} or Proposition 1 in \cite{derksen-weyman2002} 
(see also \cite{domokos} for a special case).

\begin{proposition}\label{prop:lifting} 
Let $f$ be a relative invariant on $\rep(Q,\alpha)$ with weight $\theta$, and assume that $f$ is homogeneous of total degree $d$ in the entries belonging to  $\{x(a)\mid a_-=v\}$. Then there is a relative invariant $\tilde f$ on $\rep(Q^v,\alpha^v)$ with weight $\theta^{v,d}$ such that $f=\tilde{f}\circ\iota$. 
\end{proposition}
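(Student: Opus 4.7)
The plan is to exploit the observation that any $y \in \rep(Q^v, \alpha^v)$ with $y(e)$ invertible lies in the $\gl(\alpha^v)$-orbit of a point of $\iota(\rep(Q,\alpha))$: taking $g \in \gl(\alpha^v)$ with $g(v_-) = y(e)$ and $g(i) = I$ for every $i \neq v_-$ in $Q_0^v$, a direct check gives $(g \cdot y)(e) = I$ and $g \cdot y = \iota(\psi(y))$, where
$$\psi(y)(a) := y(a^v) \text{ for } a_- \neq v, \qquad \psi(y)(a) := y(a^v)\,y(e)^{-1} \text{ for } a_- = v.$$
On the dense open subset $U := \{y : \det y(e) \neq 0\}$ I would therefore set $\tilde f(y) := \det(y(e))^d \cdot f(\psi(y))$, and then check three things: (a) $\tilde f$ extends to a polynomial on all of $\rep(Q^v, \alpha^v)$; (b) $\tilde f \circ \iota = f$; (c) $\tilde f$ is a relative invariant of weight $\theta^{v,d}$.

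Property (b) is immediate, since $\iota(x)(e) = I$ forces $\psi(\iota(x)) = x$ and $\det(\iota(x)(e))^d = 1$. For (a), I would invoke Cramer's rule: each entry of $y(a^v)\,y(e)^{-1}$ is $\det(y(e))^{-1}$ times a polynomial in the entries of $y$. Since $f$ is homogeneous of total degree exactly $d$ in the entries of $\{x(a) : a_- = v\}$, every monomial of $f(\psi(y))$ picks up precisely $d$ factors of $y(e)^{-1}$-entries and thus an overall denominator $\det(y(e))^d$, which the prefactor clears. This is where the sharp degree hypothesis on $f$ is used, and I expect it to be the main technical step of the proof.

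For (c), the key point is an equivariance identity $\psi(h \cdot y) = \tilde h \cdot \psi(y)$ valid for $h \in \gl(\alpha^v)$ and $y \in U$, where $\tilde h \in \gl(\alpha)$ is defined by $\tilde h(v) := h(v_+)$ and $\tilde h(i) := h(i)$ for $i \in Q_0 \setminus \{v\}$; this is a short case-check depending on the locations of $a_-$ and $a_+$. Combining it with $\det((h \cdot y)(e)) = \det(h(v_+))\,\det(h(v_-))^{-1}\,\det(y(e))$ and the relative invariance of $f$ of weight $\theta$, the exponents of the $\det(h(i))$ reorganise to $-d$ at $v_-$, $\theta(v) + d$ at $v_+$, and $\theta(i)$ for $i \in Q_0 \setminus \{v\}$, matching $\theta^{v,d}$. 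The identity, holding on $U$, then propagates to all of $\rep(Q^v, \alpha^v)$ by a standard polynomial-extension argument after clearing the negative power $\det(h(v_-))^{-d}$.
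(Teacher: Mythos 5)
Your proof is correct and follows essentially the same route as the paper: the paper defines a morphism $\Phi$ on all of $\rep(Q^v,\alpha^v)$ using the adjugate $x(e)^*=\det(x(e))\,x(e)^{-1}$ in place of your $\det(y(e))^d\,f(\psi(y))$, which makes polynomiality automatic and absorbs your extension step (a), but the untwisting map, the equivariance computation, and the use of degree-$d$ homogeneity to produce the weight $\theta^{v,d}$ are identical. The only substantive difference is presentational: the adjugate packages Cramer's rule once and for all, whereas you clear denominators on the dense open set $U$ and extend.
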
 

\begin{proof} Denote $M^*$ the adjugate of an $l\times l$ matrix $M$: 
the $(i,j)$-entry of $M^*$ is $(-1)^{i+j}$-times the determinant of the $(l-1)\times(l-1)$ minor of $M$ obtained by omiting the $j$th row and the $i$th column. When $M$ is invertible, then $M^*=\det(M)M^{-1}$. 
This shows that $(AMB^{-1})^*=\det(B)^{-1}\det(A)BM^*A^{-1}$ for $A,B\in\gl_l(\field)$. 
Define the morphism $\Phi:\rep(Q^v,\alpha^v)\to\rep(Q,\alpha)$ by 
$$\Phi(x)(a)=
\begin{cases}x(a^v) \mbox{ when } a_-\neq v \\
\Phi(x)(a)=x(a^v)\cdot x(e)^* \mbox{ when } a_-=v.\end{cases}$$ 
Given $g\in\gl(\alpha^v)$ define 
$\bar g\in\gl(\alpha)$ by $\bar g(i)=g(i)$ for $i\neq v$ and 
$\bar g(v)=g(v_+)$. For $x\in\rep(Q^v,\alpha^v)$ one has the formula 
\begin{equation}\notag
\Phi(g\cdot x)(a)=
\begin{cases}(\bar g\cdot\Phi(x))(a)\quad\mbox{ when }a_-\neq v \\
\det(g(v_+))\det(g(v_-))^{-1} (\bar g\cdot \Phi(x))(a)\quad \mbox{ when }a_-=v.
\end{cases}  
\end{equation}
This shows that $\tilde{f}:=f\circ \Phi$ is a relative invariant on $\rep(Q^v,\alpha^v)$ with weight $\theta^{v,d}$.  It has the property that $\tilde{f}(\iota(x))=f(x)$ for all $x\in\rep(Q,\alpha)$. 
\end{proof} 

Next we recall the concept of the {\it local quiver setting of} 
$\xi\in\moduli(Q,\alpha,\theta)$ from \cite{adriaenssens-lebruyn}.  
The fibre $\pi^{-1}(\xi)$ contains a unique closed orbit (closed in $\rep(Q,\alpha)^\sstheta$), say the orbit of $x$. 
Then the representation $V_x$ of $Q$ corresponding to $x$ decomposes as 
$\bigoplus_{i=1}^qm_iV_i$, where $V_1,\ldots,V_q$ are pairwise non-isomorphic $\theta$-stable representations, and $m_i\in\mn$ stands for the multiplicity 
of $V_i$ as a summand. Denote by $\beta_i$ the dimension vector of $V_i$. 
Then $\tau:=(\beta_1,m_1;\ldots;\beta_q,m_q)$ is called the {\it $\theta$-semistable 
representation type} of $\xi$ (note that it may happen that $\beta_i=\beta_j$ 
for some $i\neq j$, when there are non-isomorphic $\theta$-stable representations of dimension vector $\beta_i=\beta_j$). 
The {\it local quiver setting} associated to $\xi$ depends on the representation type $\tau$ of $\xi$, and it consists of a quiver 
$Q_{\xi}$ with vertex set $\{1,\ldots,q\}$, together with the dimension vector $\mu_{\xi}:=(m_1,\ldots,m_q)$. The quiver $Q_{\xi}$ has  
$\delta^i_j-\langle\beta_i,\beta_j\rangle_Q$ arrows from $i$ to $j$, where 
$\delta^i_j=0$, if $i\neq j$, $\delta^i_i=1$, and 
$\langle \alpha,\beta\rangle_Q=\sum_{i\in Q_0}\alpha(i)\beta(i)
-\sum_{a\in Q_1}\alpha(a_-)\beta(a_+)$ is the Ringel bilinear form on $\mz^{Q_0}$. 
By the {\it local quiver settings of} $\moduli(Q,\alpha,\theta)$ we mean the 
(finite) set of local quiver settings $(Q_{\xi},\mu_{\xi})$ that occur as the local quiver setting associated to some point $\xi\in \moduli(Q,\alpha,\theta)$. 

\begin{theorem} \label{thm:embedding} For a sufficiently large non-negative integer $n$, the map $\iota$ induces an isomorphism from the moduli space $\moduli(Q,\alpha,\theta)$ onto a Zariski open dense subvariety of $\moduli(Q^v,\alpha^v,\theta^{v,n})$. 
This isomorphism maps $\moduli^s(Q,\alpha,\theta)$ onto a dense open subset of   
$\moduli^s(Q^v,\alpha^v,\theta^{v,n})$. 
Moreover, for sufficiently large $n$, the local quiver settings of $\moduli(Q^v,\alpha^v,\theta^{v,n})$ and $\moduli(Q,\alpha,\theta)$ coincide.  
\end{theorem}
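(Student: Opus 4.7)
The strategy is to exhibit $\iota$ as a section of a trivial principal $H$-bundle $\phi: U^v \to \rep(Q,\alpha)$ on the $\gl(\alpha^v)$-invariant open set $U^v := \{y \in \rep(Q^v,\alpha^v) : \det y(e) \neq 0\}$, where $H \subset \gl(\alpha^v)$ is the normal subgroup acting nontrivially only at $v_-$. All assertions then follow by standard GIT from a matching semistability condition, a comparison of stabilizers, and an invariance property of the Ringel form.

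The first step is $\iota^{-1}(\rep(Q^v,\alpha^v)^{\ssvntheta}) = \rep(Q,\alpha)^\sstheta$ for $n$ sufficiently large. The inclusion $\subseteq$ is formal: if $\tilde g \in SI(Q^v,\alpha^v,m\theta^{v,n})$ with $m\geq 1$ does not vanish on $\iota(x)$, then $\tilde g\circ\iota$ is a relative invariant on $\rep(Q,\alpha)$ of weight $m\theta$ not vanishing at $x$ (under the embedding $g\mapsto\bar g$ with $\bar g(v_-)=\bar g(v_+)=g(v)$, the weights $-mn$ at $v_-$ and $m(\theta(v)+n)$ at $v_+$ collapse to $m\theta(v)$ at $v$). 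For the opposite inclusion, pick generators $f_1,\ldots,f_s$ of $\bigoplus_{m\geq 1} SI(Q,\alpha,m\theta)$ of weights $m_i\theta$ and $v$-source-degrees $d_i$; by Proposition~\ref{prop:lifting} each $f_i$ lifts to $\tilde f_i$ of weight $\theta^{v,d_i}$, and $\tilde f_i\cdot\det(y(e))^{m_in-d_i}$ has weight $m_i\theta^{v,n}$ and restricts via $\iota$ to $f_i$, provided $n\geq \max_i d_i/m_i$.

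Next, on $U^v$ set $\phi(y)(a)=y(a^v)y(e)^{-1}$ when $a_-=v$ and $\phi(y)(a)=y(a^v)$ otherwise, so that $\phi\circ\iota=\mathrm{id}$. Then $\phi$ is $H$-invariant and $\gl(\alpha)$-equivariant—where $\gl(\alpha)$ is identified with the subgroup $\{g\in\gl(\alpha^v):g(v_-)=g(v_+)\}$, giving the semidirect decomposition $\gl(\alpha^v)=\gl(\alpha)\ltimes H$—and $\iota$ trivializes $\phi$ as a principal $H$-bundle. Combined with the semistability correspondence this identifies $(U^v\cap\rep(Q^v,\alpha^v)^{\ssvntheta})/\!/\gl(\alpha^v)$ with $\rep(Q,\alpha)^\sstheta/\!/\gl(\alpha)=\moduli(Q,\alpha,\theta)$, and realises the latter as an open subvariety of $\moduli(Q^v,\alpha^v,\theta^{v,n})$. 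Density follows because $U^v$ is the complement of the hypersurface $\{\det y(e)=0\}$ in the irreducible space $\rep(Q^v,\alpha^v)$, so $U^v\cap\rep^{\ssvntheta}$ is dense in $\rep^{\ssvntheta}$ and the GIT quotient map is continuous and surjective. An immediate stabilizer calculation yields $\stab_{\gl(\alpha^v)}(\iota(x))\cong\stab_{\gl(\alpha)}(x)$, so stability is preserved and $\moduli^s(Q,\alpha,\theta)$ embeds as a dense open subset of $\moduli^s(Q^v,\alpha^v,\theta^{v,n})$.

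For the local quiver settings, a direct expansion gives $\langle\alpha^v,\beta^v\rangle_{Q^v}=\langle\alpha,\beta\rangle_Q$—the extra diagonal contribution from doubling $v$ cancels the contribution of the new arrow $e$—so the local quiver $Q_\xi$ attached to given dimension-vector and multiplicity data is intrinsic. Moreover, $\theta^{v,n}\cdot\dim V=0$ reads $\sum_{i\neq v}\theta(i)\dim V(i)+\theta(v)\dim V(v_+)+n(\dim V(v_+)-\dim V(v_-))=0$, which for $n$ exceeding the absolute values of all non-$n$ coefficients over the (finite) range of subdimension vectors of $\alpha^v$ forces $\dim V(v_-)=\dim V(v_+)$. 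Hence dimension vectors of the stable summands of any closed-orbit representative lift to dimension vectors of $Q$, and applying the previous paragraph to each such $\beta$ makes $\moduli^s(Q,\beta,\theta)$ dense open in $\moduli^s(Q^v,\beta^v,\theta^{v,n})$. The main obstacle lies here: a closed orbit in $\rep(Q^v,\alpha^v)^{\ssvntheta}$ may lie entirely outside $U^v$ (if some stable summand $V_i$ has $\det V_i(e)=0$), so the combinatorial local quiver setting at such a boundary point must be transplanted rather than the representation itself transported; realising it inside $\moduli(Q,\alpha,\theta)$ requires picking pairwise non-isomorphic $\theta$-stable $Q$-representations of the prescribed dimension vectors, whose existence follows from irreducibility of the relevant stable moduli spaces together with the density just established.
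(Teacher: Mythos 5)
Your overall architecture (lift semi-invariants via Proposition~\ref{prop:lifting} twisted by powers of $\det(y(e))$, retract via $y\mapsto y(a^v)y(e)^{-1}$, invariance of the Ringel form under $\beta\mapsto\beta^v$) matches the paper's, but there is a genuine gap at the central step. You assert that the identification $(U^v\cap\rep(Q^v,\alpha^v)^{\ssvntheta})/\!/\gl(\alpha^v)\cong\moduli(Q,\alpha,\theta)$ ``realises the latter as an open subvariety'' of $\moduli(Q^v,\alpha^v,\theta^{v,n})$, with density justified by continuity of the quotient map. Density is indeed that easy, but openness is not: for a $\gl(\alpha^v)$-stable open subset $W$ of the semistable locus, the induced map $W/\!/\gl(\alpha^v)\to\moduli(Q^v,\alpha^v,\theta^{v,n})$ is an open immersion (indeed, is even injective) only if $W$ is \emph{saturated} for $\tilde\pi$, i.e.\ $W=\tilde\pi^{-1}(\tilde\pi(W))$. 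Here $U^v$ is cut out by $\det(y(e))\neq 0$, and $\det(y(e))$ is a relative invariant whose weight is \emph{not} a multiple of $\theta^{v,n}$, so saturation is not automatic: a priori the closed orbit in the closure of $\tilde G\cdot\iota(x)$ could lie entirely in $\{\det y(e)=0\}$, two such orbits could collide there, and injectivity and openness of $F$ would both fail. Proving that this does not happen is the technical heart of the paper's argument (Lemma~\ref{lemma:imF}(ii)): one must first choose $n$ so that the semistability correspondence of your first paragraph holds for \emph{every} dimension vector $\beta\leq\alpha$ (your bound $n\geq\max_i d_i/m_i$, taken only for generators of the semi-invariant algebra at $\alpha$ itself, is not enough), and then argue that the $\theta^{v,n}$-stable composition factors of $V_{\iota(x)}$ are the $V_{\iota(x_j)}$ for the $\theta$-stable composition factors $V_{x_j}$ of $V_x$; this forces any $y$ that is S-equivalent to some $\iota(x)$ to have $y(e)$ conjugate to a unipotent upper triangular matrix, hence $\det(y(e))\neq 0$ and $y\in U^v$. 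Your proposal never supplies this step, and ``standard GIT'' does not supply it either.

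A secondary, smaller point: in your last paragraph you correctly identify that closed orbits over the boundary $\tilde\moduli\setminus F(\moduli)$ need not meet $U^v$ and that the local quiver setting must be matched combinatorially. Your sketch of how to produce a point of $\moduli(Q,\alpha,\theta)$ with representation type $(\gamma_1,m_1;\ldots;\gamma_q,m_q)$ is essentially the paper's argument (the one-versus-infinitely-many dichotomy for stable isomorphism classes, transported through the dense open embedding for each $\gamma_i\leq\alpha$), so that part is acceptable in outline once the openness issue above is repaired, since that repair is exactly what makes the embeddings $\moduli^s(Q,\gamma_i,\theta)\hookrightarrow\moduli^s(Q^v,\gamma_i^v,\theta^{v,n})$ available for all the relevant subdimension vectors simultaneously.
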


The proof will be divided into three Lemmas.  In order to simplify notation, set 
$\rep:=\rep(Q,\alpha)$, $\rep^{ss}:=\rep(Q,\alpha)^\sstheta$, $\rep^{s}:=\rep(Q,\alpha)^\stheta$, 
$\moduli:=\moduli(Q,\alpha,\theta)$, $\moduli^s:=\moduli^s(Q,\alpha,\theta)$, 
 $\pi:=\pi(Q,\alpha,\theta)$, $G:=\gl(\alpha)$, 
and denote $\tilde{\rep}$,  $\tilde{\rep}^{ss}$,  $\tilde\rep^{s}$, 
$\tilde{\moduli}$, $\tilde\moduli^s$,  
$\tilde\pi$, $\tilde  G$ the corresponding objects for $Q^v$, 
$\alpha^v$, and $\theta^{v,n}$. 

We say that a dimension vector $\beta$ is $\theta$-semistable (resp. $\theta$-stable), if $\moduli(Q,\beta,\theta)$ (resp. $\moduli^s(Q,\beta,\theta)$) is non-empty. By Proposition 6.7 in \cite{derksen-weyman}, $\alpha$ is 
$\theta$-semistable (stable) if and only if $\alpha^v$ is $\theta^{v,n}$-semistable (stable) for sufficiently large $n$. 
First we need to strengthen this statement as follows: 

\begin{lemma}\label{lemma:iotaim} (i) For sufficiently large $n$ we have 
$\iota(\rep^{ss})=\tilde{\rep}^{ss}\cap\iota(\rep)$. 

(ii) If the conclusion of (i) holds for $n$, then 
$\iota(\rep^{s})=\tilde{\rep}^{s}\cap\iota(\rep)$.
\end{lemma}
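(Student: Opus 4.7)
The plan for (i) is to prove the two inclusions separately. For $\iota(\rep^{ss})\subseteq\tilde\rep^{ss}\cap\iota(\rep)$, I would pick finitely many $\theta$-semi-invariants $f_1,\dots,f_r$ on $\rep$ that jointly cut out $\rep^{ss}$ (by Noetherianness of $\rep$, finitely many suffice), where $f_i$ has weight $k_i\theta$ with $k_i\in\mn$ and is homogeneous of total degree $d_i$ in the entries belonging to arrows starting at $v$. Proposition~\ref{prop:lifting} produces lifts $\tilde f_i$ on $\tilde\rep$ of weight $(k_i\theta)^{v,d_i}$ with $\tilde f_i\circ\iota=f_i$. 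The function $y\mapsto\det(y(e))$ is itself a relative invariant on $\tilde\rep$ of the weight that is $-1$ on $v_-$, $+1$ on $v_+$, and zero elsewhere, so a direct weight computation shows $\det(e)^{k_in-d_i}\cdot\tilde f_i$ has weight exactly $k_i\theta^{v,n}$ provided $n\ge d_i/k_i$. Taking $n\ge\max_i d_i/k_i$, any $x\in\rep^{ss}$ satisfies $f_j(x)\ne0$ for some $j$, and since $\iota(x)(e)=I$, we obtain $\det(\iota(x)(e))^{k_jn-d_j}\tilde f_j(\iota(x))=f_j(x)\ne0$; hence $\iota(x)\in\tilde\rep^{ss}$.

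For the reverse inclusion $\tilde\rep^{ss}\cap\iota(\rep)\subseteq\iota(\rep^{ss})$, which holds for every $n$, I would start from a relative invariant $\tilde F$ on $\tilde\rep$ of weight $c\theta^{v,n}$ with $\tilde F(\iota(x))\ne0$, set $F:=\tilde F\circ\iota$, and verify it is a relative invariant on $\rep$ of weight $c\theta$. This reduces to the equivariance $\iota(g\cdot x)=\tilde g\cdot\iota(x)$, where $\tilde g\in\tilde G$ is defined by $\tilde g(i)=g(i)$ for $i\ne v$ and $\tilde g(v_-)=\tilde g(v_+)=g(v)$: the $\det(g(v))$-contributions coming from $c\theta^{v,n}(v_-)=-cn$ and $c\theta^{v,n}(v_+)=c(\theta(v)+n)$ combine to $\det(g(v))^{c\theta(v)}$, while the other coordinates match trivially. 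Thus $F$ has weight $c\theta$, and $F(x)=\tilde F(\iota(x))\ne0$ witnesses $x\in\rep^{ss}$.

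For (ii), assuming (i) holds for $n$, the remaining content is that the $\field^\times$-stabilizer condition transfers under $\iota$. I would show that the doubling map $g\mapsto\tilde g$ above restricts to an isomorphism $\stab_G(x)\to\stab_{\tilde G}(\iota(x))$ taking the scalar subgroup $\field^\times$ of $G$ to that of $\tilde G$: one direction is immediate from $\iota(g\cdot x)=\tilde g\cdot\iota(x)$, and for the reverse, if $h\in\stab_{\tilde G}(\iota(x))$, then evaluating $h\cdot\iota(x)=\iota(x)$ at the arrow $e$ yields $h(v_+)\cdot I\cdot h(v_-)^{-1}=I$, forcing $h(v_+)=h(v_-)$ so that $h=\tilde g$ for some $g\in G$, which then fixes $x$. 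Combined with (i), this yields $x\in\rep^s$ iff $\iota(x)\in\tilde\rep^s$. The main obstacle is in the first direction of (i): one must choose a single $n$ that simultaneously compensates the weight discrepancies $(k_i\theta)^{v,d_i}\to k_i\theta^{v,n}$ for all the $f_i$, and it is precisely the Noetherianness argument which makes this uniform choice possible.
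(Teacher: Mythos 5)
Your proposal is correct and follows essentially the same route as the paper's proof: lift homogeneous relative invariants via Proposition~\ref{prop:lifting}, adjust the weight to $k_i\theta^{v,n}$ by multiplying with powers of the relative invariant $y\mapsto\det(y(e))$, use Noetherianness to get a uniform $n$, pull back along the embedding $G\cong\{g\in\tilde G\mid g(v_-)=g(v_+)\}$ for the converse inclusion, and compare stabilizers through the arrow $e$ for part (ii). The only step you should make explicit is that the $f_i$ can indeed be chosen homogeneous in the entries of the arrows starting at $v$: this holds because the $G$-action preserves that grading, so the homogeneous components of a relative invariant are again relative invariants of the same weight and cut out the same locus.
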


\begin{proof}  (i) This could be proved by modifying the proof of 
Proposition 6.7 in \cite{derksen-weyman}. We give a different proof based on Proposition~\ref{prop:lifting}, yielding a bound of different nature for the necessary $n$. 
Introduce a grading on the coordinate ring of $\rep$ by specifying the 
degree of an entry of $x(a)$ to be $1$ when $a_-=v$, and $0$ when $a_-\neq v$. 
Assume that $f(x)\neq 0$ for 
some homogeneous relative invariant $f$ of weight $\sigma:=m\theta$ ($m\in\mn$). 
By Proposition~\ref{prop:lifting}, $\tilde{f}(\iota(x))\neq 0$, hence $\iota(x)$ is $\sigma^{v,d}$-semistable, where $d$ is the degree of $f$. Moreover, multiplying $f$ by the $r$th power of the relative invariant 
$y\mapsto \det(y(e))$, we obtain a relative invariant with weight $\sigma^{v,d+r}$ not vanishing on $\iota(x)$. 
This shows that $\iota(x)$ is $\sigma^{v,n}$-semistable for all $n\geq d$. 
Since $\sigma=m\theta$, this clearly implies that $\iota(x)$ is $\theta^{v,n}$-semistable for all $n\geq d/m$. 

Now take a finite set $f_1,\ldots,f_q$ of relative invariants with weight equivalent to $\theta$, whose common zero locus in $\rep$ is the complement of  
$\rep^{ss}$. We may assume that $\theta$ is indivisible, so the weight of $f_i$ is $m_i\theta$, where $m_i\in\mn$. Since the action of $G$ preserves the grading introduced at the beginning of the proof,  the homogeneous components of a relative invariant are also relative invariants of the same weight, so we may assume  
that all the $f_i$ are homogeneous; write $d_i$ for the degree of $f_i$.  
Fix a natural number $n$  with $n\geq d_i/m_i$ for all $i=1,\ldots,q$.  
If $x\in\rep$ is $\theta$-semistable, 
then $f_i(x)\neq 0$ for some $i\in\{1,\ldots,q\}$, hence $\iota(x)$ is 
$\theta^{v,n}$-semistable by the considerations above. 

Conversely, if $\iota(x)$ is $\theta^{v,n}$-semistable, 
then $f(\iota(x))\neq 0$ for some relative $\tilde G$-invariant $f$ with weight equivalent to $\theta^{v,n}$. Identify $G$ with the subgroup $H:=\{g\in\tilde G \mid \quad g(v_-)=g(v_+)\}$ in the obvious way, and view 
$\tilde{\rep}$ as an $H\cong G$-variety. 
Then $\iota$ is $G$-equivariant, showing that $f\circ\iota$ is a relative invariant on $\rep$ with weight equivalent to $\theta$, and 
$f\circ\iota$ does not vanish on $x$, hence $x$ is $\theta$-semistable. 

(ii) If $x\in\rep^{s}$, then we know already that $\iota(x)$ is $\theta^{v,n}$-semistable, so to conclude $\iota(x)\in\tilde\rep^{s}$ it is sufficient to show that the stabilizer of $\iota(x)$ in $\tilde G$ is just the center $\field^{\times}$.  
If $g\in\tilde G$ stabilizes $\iota(x)$, then 
$g(v_+)\iota(x)(e)g(v_-)=\iota(x)(e)=I$, hence $g(v_+)=g(v_-)$. 
So $g$ belongs to the subgroup $H\cong G$ of $\tilde G$ mentioned above. Since $\iota$ is $G$-equivariant, we have 
$\stab_{\tilde G}(\iota(x))=\stab_H(\iota(x))\cong\stab_G(x)=\field^{\times}$, 
as we claimed. Conversely, if $y\in\tilde\rep^{s}\cap \iota(\rep)$, then 
$y=\iota(x)$ for some $x\in\rep^{ss}$ by (i), and the above calculation of stabilizers shows that $x\in\rep^{s}$. 
\end{proof} 

\begin{lemma}\label{lemma:imF} (i) If the conclusion of Lemma~\ref{lemma:iotaim} (i) holds for $n$, then 
there exists a unique morphism 
\quad $F:\moduli \to\tilde{\moduli} 
\quad \mbox{ with }\quad  
F\circ \pi=\tilde{\pi}\circ \iota.$  

(ii) For sufficiently large $n$, the image of $F$ is a dense open subvariety of $\tilde{\moduli}$. 

(iii) If the conclusion of (ii) holds for $n$, then $F$ gives an isomorphism between $\moduli$ and the dense open subvariety $F(\moduli)$ in $\tilde{\moduli}$. 

(iv)  If the conclusion of (ii) holds for $n$, then $F(\moduli^s)$ is a dense open subset of $\tilde\moduli^s$. 
\end{lemma}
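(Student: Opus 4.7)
My plan is to handle all four parts together by identifying an explicit $\tilde G$-invariant open neighborhood of $\iota(\rep)$ in $\tilde\rep$ and then invoking standard GIT formalism. For (i), since $\iota$ is equivariant for the inclusion $G \cong H := \{g \in \tilde G : g(v_-) = g(v_+)\} \hookrightarrow \tilde G$ (already exploited in the proof of Lemma~\ref{lemma:iotaim}) and $\tilde\pi$ is $\tilde G$-invariant, the composition $\tilde\pi \circ \iota$ is $G$-invariant on $\rep^{ss}$, and the universal property of the good GIT quotient $\pi$ produces the unique $F$.

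The geometric heart of the argument is the $\tilde G$-invariant Zariski open subset
\[
U := \{\, y \in \tilde\rep \;:\; y(e) \in \gl_{\alpha(v)}(\field) \,\} \subseteq \tilde\rep.
\]
A direct gauge calculation (translate $y \in U$ by $\tilde g \in \tilde G$ with $\tilde g(v_+) = y(e)$ and identity elsewhere) shows $U = \tilde G \cdot \iota(\rep)$, and Lemma~\ref{lemma:iotaim}(i) then gives $U \cap \tilde\rep^{ss} = \tilde G \cdot \iota(\rep^{ss})$. For (ii) this immediately yields $F(\moduli) = \tilde\pi(U \cap \tilde\rep^{ss})$, which is open in $\tilde\moduli$ by the good-quotient property of $\tilde\pi$ (its complement is $\tilde\pi$ of the $\tilde G$-invariant closed set $\tilde\rep^{ss} \setminus U$). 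Density follows from irreducibility: for large $n$ both $\rep^{ss}$ and $\tilde\rep^{ss}$ are nonempty open subsets of irreducible affine spaces and hence are themselves irreducible, so $\tilde\moduli$ is irreducible and every nonempty open subvariety is dense.

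For (iii) the plan is to upgrade the bijection of orbits already implicit in the above to a scheme-theoretic isomorphism by recognizing $U$ as an associated bundle. The map $\tilde G \times \iota(\rep) \to U$, $(g,y) \mapsto g \cdot y$, is constant on the $H$-orbits under $h \cdot (g,y) = (gh^{-1}, h \cdot y)$ and descends to a $\tilde G$-equivariant isomorphism
\[
\tilde G \times_H \iota(\rep) \;\stackrel{\sim}{\longrightarrow}\; U,
\]
where injectivity relies on the fact that $g \cdot \iota(\rep) \cap \iota(\rep) \neq \emptyset$ forces $g \in H$. Restricting to the semistable loci and applying the standard GIT principle that taking the good quotient commutes with the associated-bundle construction $\tilde G \times_H (-)$ by a reductive subgroup, one obtains
\[
F(\moduli) \;=\; (U \cap \tilde\rep^{ss})/\!/\tilde G \;\cong\; \iota(\rep^{ss})/\!/H \;\cong\; \rep^{ss}/\!/G \;=\; \moduli,
\]
and tracing through the identifications shows this composite is precisely $F$. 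Part (iv) follows by the same argument restricted to the stable loci: Lemma~\ref{lemma:iotaim}(ii) gives $\tilde G \cdot \iota(\rep^s) = U \cap \tilde\rep^s$, so $F(\moduli^s) = \tilde\pi(U \cap \tilde\rep^s)$ is open in $\tilde\moduli^s$ (for the geometric quotient $\tilde\pi \colon \tilde\rep^s \to \tilde\moduli^s$) and dense by irreducibility.

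The step I expect to require the most care is the passage from the orbit-level bijection to a genuine isomorphism of varieties in (iii); without the clean bundle identification $U \cong \tilde G \times_H \iota(\rep)$ one would obtain only a bijective morphism, which in positive characteristic need not be an isomorphism. All other steps reduce to bookkeeping with the good quotient map once the open set $U$ and the subgroup $H$ have been brought into play.
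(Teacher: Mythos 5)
There is a genuine gap at the crux of part (ii): you assert that $\tilde{\pi}(U^{ss})$ is open because ``its complement is $\tilde\pi$ of the $\tilde G$-invariant closed set $\tilde\rep^{ss}\setminus U$.'' The good-quotient property does give that $\tilde\pi(\tilde\rep^{ss}\setminus U)$ is closed, but the identity $\tilde\moduli\setminus\tilde\pi(U^{ss})=\tilde\pi(\tilde\rep^{ss}\setminus U)$ holds only if no fibre of $\tilde\pi$ meets both $U$ and its complement, i.e.\ only if $U^{ss}$ is $\tilde\pi$-saturated ($U^{ss}=\tilde\pi^{-1}(\tilde\pi(U^{ss}))$). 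This is not automatic: a fibre of $\tilde\pi$ consists of several $\tilde G$-orbits (one S-equivalence class), and $\det(y(e))$ is a relative invariant whose weight is in general \emph{not} proportional to $\theta^{v,n}$, so it may well vanish on the closed orbit sitting in the closure of the orbit of some $y\in U^{ss}$. Proving saturation is exactly the bulk of the paper's proof of (ii): one must take $n$ large enough that the conclusion of Lemma~\ref{lemma:iotaim}(i) holds for \emph{all} dimension vectors $\beta\leq\alpha$ (a strictly stronger hypothesis than the one you carry along, and the reason (ii) is stated ``for sufficiently large $n$''), deduce that the $\theta^{v,n}$-stable composition factors of any $V_y$ with $\tilde\pi(y)\in\tilde\pi(U^{ss})$ are of the form $V_{\iota(x_i)}$, and conclude that $y(e)$ is conjugate to a unipotent upper-triangular matrix, hence invertible, so $y\in U^{ss}$. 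Your proposal contains none of this, and the missing saturation also undercuts (iii): the identification of $F(\moduli)$ (as an open subvariety of $\tilde\moduli$) with a good quotient of $U^{ss}$ requires precisely that $U^{ss}$ be open and saturated (Proposition 3.10(a) in \cite{newstead}); without it your chain $F(\moduli)=(U\cap\tilde\rep^{ss})/\!/\tilde G$ has no justification.

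The remaining ingredients are sound and partly diverge from the paper in an interesting way. Part (i), the computation $U=\tilde G\cdot\iota(\rep)$, the density arguments via irreducibility, and part (iv) all match the paper in substance. For (iii) the paper constructs an explicit $\tilde G$-invariant retraction $\Psi:U\to\rep$, $\Psi(x)(a)=x(a^v)x(e)^{-1}$ for $a_-=v$, and invokes the categorical-quotient property of $\tilde\pi|_{U^{ss}}$ to descend it to an inverse of $F$; your associated-bundle identification $U\cong\tilde G\times_H\iota(\rep)$ is a legitimate alternative (and you are right to worry about bijective-but-not-isomorphic maps in positive characteristic --- the inverse $y\mapsto(g_y,g_y^{-1}\cdot y)$ is a morphism, so the identification does hold), but it only becomes a proof of (iii) after the saturation of $U^{ss}$ is established and after one checks that the restriction of the $\theta^{v,n}$-linearization to $H\cong G$ is the $\theta$-linearization.
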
 

 \begin{proof} (i) Consider the morphism 
$\tilde{\pi} \circ \iota:\rep^{ss}
\to \tilde{\moduli}$.  
It is $G$-invariant, hence by the universal property of the quotient morphism $\pi$ (see e.g. Theorem 3.21 (i) and Proposition 3.11 (i) in \cite{newstead}), there exists a unique morphism  
$F:\moduli \to\tilde{\moduli}$  with $F\circ \pi=\tilde{\pi}\circ \iota$. 

(ii) Clearly, $U:=\tilde G\cdot \iota(\rep)$ is the dense 
$\tilde G$-stable 
affine open subset in $\tilde{\rep}$ consisting of the points $x\in\tilde{R}$ with $\det(x(e))\neq 0$. Write $U^{ss}:=U\cap \tilde\rep^{ss}$.  
Then $U^{ss}$ is dense in $\tilde\rep^{ss}$, hence   
$\tilde{\pi}(U^{ss})$ is a dense subset of $\tilde\pi(\tilde{\rep}^{ss})=\tilde{\moduli}$. 
On the other hand, 
\begin{equation}\label{eq:imF}
\tilde{\pi}(U^{ss})=\tilde\pi(\tilde G\cdot\iota(\rep)\cap\tilde{\rep}^{ss})=\tilde{\pi}(\iota(\rep)\cap \tilde{\rep}^{ss})
=\tilde{\pi}(\iota(\rep^{ss}))=F(\moduli) 
\end{equation} 
showing that $F(\moduli)$ is dense in $\tilde{\moduli}$. 

Now choose $n$ large enough so that 
$\iota(\rep(Q,\beta)^\sstheta)=\rep(Q,\beta^v)^\ssvntheta\cap\iota(\rep(Q,\beta))$ 
holds for all dimension vectors $\beta\leq\alpha$, where we write $\beta\leq\alpha$, if $\beta(i)\leq\alpha(i)$ for all $i\in Q_0$. We shall show that 
$U^{ss}$ is $\tilde\pi$-saturated, that is, $U^{ss}=\tilde\pi^{-1}(\tilde\pi(U^{ss}))$. 
Suppose that $y\in \tilde\rep^{ss}$ with $\tilde\pi(y)\in\tilde\pi(U^{ss})$. 
Then there is an $x\in\rep^{ss}$ with $\tilde\pi(y)=\tilde\pi(\iota(x))$ by 
(\ref{eq:imF}). It follows by \cite{king} that the S-equivalence class of 
$V_y$ coincides with the S-equivalence class of $V_{\iota(x)}$, where we denote by $V_z$ the representation of the quiver $Q^v$ belonging to $z\in\tilde\rep$. That is, $V_y$ and $V_{\iota(x)}$ have the same $\theta^{v,n}$-stable composition factors (i.e. Jordan-H\"older factors in the category of $\theta^{v,n}$-semistable representations of $Q^v$).  
By the choice of $n$ and by Lemma~\ref{lemma:iotaim} (ii), the $\theta^{v,n}$-stable composition factors of $V_{\iota(x)}$ (and hence of $V_y$) are $V_{\iota(x_1)},\ldots,V_{\iota(x_q)}$, 
where $V_{x_1},\ldots,V_{x_q}$ are the $\theta$-stable composition factors 
of $V_x$. It follows that replacing $y$ by an appropriate element in its $\tilde G$-orbit, we have that $y(e)$ is an upper triangular matrix with all diagonal entries equal to $1$. Consequently, $\det(y(e))\neq 0$, so $y\in U^{ss}$. 

Thus $U^{ss}$ is $\tilde\pi$-saturated. It is also open and $\tilde G$-stable. Hence $\tilde\pi(U^{ss})$ is 
the complement in $\tilde\moduli$ of the image under $\tilde\pi$ of a closed $\tilde G$-stable subset of $\tilde\rep^{ss}$. Consequently, $\tilde\pi(U^{ss})=F(\moduli)$ is open in $\tilde\moduli$ 
(see the definition of a good quotient in Chapter 3 of \cite{newstead}).  

(iii) Consider the morphism $\Psi:U\to\rep$ defined by 
$$\Psi(x)(a)=\begin{cases} x(a^v)\cdot x(e)^{-1} \mbox{ for }a\in Q_1 
\mbox{ with }a_-=v \\   
x(a^v) \mbox{ for }a\in Q_1 \mbox{ with }a_-\neq v. \end{cases}$$
Since $\Psi\circ\iota$ is the identity morphism of $\rep$, and $\Psi$ maps any  $\tilde G$-orbit into a $G$-orbit, we conclude from 
Lemma~\ref{lemma:iotaim} (i) that $\Psi(U^{ss})=\rep^{ss}$, moreover, the 
morphism $\pi\circ\Psi: U^{ss}\to \moduli$ is $\tilde G$-invariant. 
Since $U^{ss}$ is open and $\tilde\pi$-saturated, 
the map 
$\tilde{\pi}\vert_{U^{ss}}:U^{ss}\to F(\moduli)$ is a good 
$\tilde G$-quotient by Proposition 3.10 (a) in \cite{newstead}, hence is a categorical quotient by Proposition 3.11 (i) in loc. cit. 
This guarantees the existence of a unique morphism $G:\tilde{\pi}(U^{ss})\to\moduli$ with 
$G\circ \tilde{\pi}\vert_{U^{ss}}=\pi\circ \Psi\vert_{U^{ss}}$. 
Moreover, since $\Psi\circ\iota$ is the identity morphism of $\rep$, 
we get that $G\circ F$ is the identity morphism of $\moduli$. 
Consequently, $F$ is an isomorphism between $\moduli$ and the dense open 
subvariety $F(\moduli)$ in $\tilde{\moduli}$. 

(iv) We know from Lemma~\ref{lemma:iotaim} (ii) that $F(\moduli^s)=\tilde\moduli^s\cap F(\moduli)$, so being the intersection of two open sets, $F(\moduli^s)$ is open in $\tilde\moduli$. To see the density, 
it remains to show that if $\tilde\moduli^s$ is non-empty, then $\moduli^s$ is non-empty. Since $\tilde\moduli^s$ is open, if it is non-empty, then it intersects nontrivially with the dense open subset $F(\moduli)\subseteq \tilde\moduli$, so $F(\moduli^s)$ is non-empty, implying in turn that $\moduli^s$ is non-empty. 
\end{proof} 

Finally we turn to the statement about the local quiver settings of 
$\tilde\moduli$. 

\begin{lemma}\label{lemma:local} For sufficiently large $n$, the local quiver settings of $\moduli$ and $\tilde{\moduli}$ coincide.  
\end{lemma}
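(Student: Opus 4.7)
The plan is to establish a bijection between representation types realized in $\moduli$ and those realized in $\tilde\moduli$ via the correspondence $(\beta_i,m_i)\leftrightarrow(\beta_i^v,m_i)$, and to verify that paired representation types produce identical local quiver settings. To begin, I would choose $n$ large enough so that the conclusions of Lemmas~\ref{lemma:iotaim} and~\ref{lemma:imF} apply for every dimension vector $\beta\leq\alpha$ (a finite set of conditions), and also so that Proposition 6.7 of~\cite{derksen-weyman} holds for all such $\beta$, giving that $\beta$ is $\theta$-(semi)stable for $Q$ if and only if $\beta^v$ is $\theta^{v,n}$-(semi)stable for $Q^v$. For $n$ this large, the linear equation $\theta^{v,n}\cdot\gamma=0$ forces $\gamma(v_-)=\gamma(v_+)$ for any bounded $\gamma\leq\alpha^v$, so every $\theta^{v,n}$-(semi)stable dimension vector bounded by $\alpha^v$ equals $\beta^v$ for a unique $\beta\leq\alpha$.

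The key numerical input is the Ringel identity $\langle\beta^v,\gamma^v\rangle_{Q^v}=\langle\beta,\gamma\rangle_Q$, which is a short direct calculation: the doubled vertex contributes an extra $\beta(v)\gamma(v)$ to the vertex sum while the extra arrow $e$ contributes the same term to the arrow sum, and these cancel because the arrow sum enters the bilinear form with a minus sign.

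For the matching of representation types, a type $\tau=(\beta_1,m_1;\ldots;\beta_q,m_q)$ realized in $\moduli$ by distinct $\theta$-stable representations $V_1,\ldots,V_q$ with $\dim V_i=\beta_i$ yields a polystable point $\bigoplus_i m_iV_{\iota(V_i)}$ of $\tilde\moduli$: its summands are $\theta^{v,n}$-stable by Lemma~\ref{lemma:iotaim}(ii), pairwise non-isomorphic because $V_i$ can be recovered from $V_{\iota(V_i)}$ via the retraction $\Psi$ appearing in the proof of Lemma~\ref{lemma:imF}(iii), and their dimensions sum to $\alpha^v$. Conversely, a type $\tilde\tau'=(\gamma_i,\tilde m_i)_{i=1}^{\tilde q}$ realized in $\tilde\moduli$ has each $\gamma_i=\beta_i^v$ for some $\theta$-stable $\beta_i\leq\alpha$ and satisfies $\sum\tilde m_i\beta_i=\alpha$; to realize $(\beta_i,\tilde m_i)$ in $\moduli$ one must supply $k_i:=|\{j:\beta_j=\beta_i\}|$ distinct points of $\moduli^s(Q,\beta_i,\theta)$. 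Applying Lemma~\ref{lemma:imF}(iv) at the dimension vector $\beta_i$ exhibits $\moduli^s(Q,\beta_i,\theta)$ as a dense open subvariety of $\moduli^s(Q^v,\beta_i^v,\theta^{v,n})$; in the zero-dimensional case a dense open subvariety of a zero-dimensional variety equals the whole variety, and in the positive-dimensional case both varieties have infinitely many points, so $\moduli^s(Q,\beta_i,\theta)$ has at least $k_i$ distinct points in either case.

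Under the bijection $\tau\leftrightarrow\tilde\tau$ the local quiver is unchanged (by the Ringel identity giving matching arrow counts $\delta^i_j-\langle\beta_i,\beta_j\rangle_Q$ between every pair of vertices) and the multiplicity dimension vector $(m_1,\ldots,m_q)$ agrees, so the local quiver settings of $\moduli$ and $\tilde\moduli$ coincide. The main technical obstacle I expect is the converse step above---arguing that every representation type realized in $\tilde\moduli$ lifts to one realized in $\moduli$---which forces Lemma~\ref{lemma:imF}(iv) to be invoked not merely at $\alpha$ but uniformly at every $\beta\leq\alpha$, and therefore requires $n$ to be chosen large enough to serve the finite collection of such dimension vectors simultaneously.
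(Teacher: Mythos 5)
Your proposal is correct and follows essentially the same route as the paper: the bijection $\gamma\mapsto\gamma^v$ on (semi)stable dimension vectors bounded by $\alpha$ (forced by the weight condition $\theta^{v,n}\cdot\beta=0$ for large $n$), the identity $\langle\gamma_i,\gamma_j\rangle_Q=\langle\gamma_i^v,\gamma_j^v\rangle_{Q^v}$, and the ``one point versus infinitely many'' count on $\moduli^s(Q,\gamma,\theta)$ versus $\moduli^s(Q^v,\gamma^v,\theta^{v,n})$ (via irreducibility and Lemma~\ref{lemma:imF}) to match representation types with repeated dimension vectors. The only cosmetic differences are that you spell out the forward direction (which the paper calls straightforward) via the retraction $\Psi$, and you cite Proposition 6.7 of \cite{derksen-weyman} for each $\beta\leq\alpha$ where the paper rederives that equivalence from its own lemmas.
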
 

\begin{proof} First we claim that for sufficiently large $n$, the $\theta^{v,n}$-semistable  dimension vectors $\beta\leq\alpha^v$ are exactly the dimension vectors 
$\gamma^v$, where $\gamma$ is a $\theta$-semistable dimension vector with $\gamma\leq\alpha$. Indeed, choose $n$ large enough such that the conclusions of Lemma~\ref{lemma:iotaim} and Lemma~\ref{lemma:imF} hold  
for all $\theta$-semistable dimension vectors $\gamma\leq\alpha$. 
Then $\gamma^v$ is $\theta^{v,n}$-semistable for some $\gamma\leq\alpha$ if and only if $\gamma$ is $\theta$-semistable. So it is sufficient to show that for sufficiently large $n$, if 
$\beta\leq\alpha^v$ is a $\theta^{v,n}$-semistable dimension vector, then 
$\beta(v_-)=\beta(v_+)$. Assume to the contrary that $\beta(v_-)\neq\beta(v_+)$, say $\beta(v_-)>\beta(v_+)$, and choose $n>\sum_{i\in Q_0}\max\{\alpha(i)\theta(i),0\}$. 
Then 
$$\sum_{i\in Q^v_0}\theta^{v,n}(i)\beta(i)=
n(\beta(v_+)-\beta(v_-))+\theta(v)\beta(v_+) +\sum_{i\in Q_0\setminus\{v\}}\theta(i)\beta(i)
\leq -n+\sum_{i\in Q_0}\max\{\alpha(i)\theta(i),0\}<0,$$ 
hence $\beta$ is not $\theta^{v,n}$-semistable. The case 
$\beta(v_-)<\beta(v_+)$ is dealt with similarly. 
So $\gamma\mapsto\gamma^v$ is a one-to-one correspondence between the set of 
$\theta$-semistable dimension vectors $\leq\alpha$ and the set of 
$\theta^{v,n}$-semistable dimension vectors $\leq\alpha^v$. 
Moreover, for a $\theta^{v,n}$-stable dimension vector $\gamma^v$, either there are infinitely many isomorphism classes of $\theta^{v,n}$-stable representations in $\rep(Q^v,\gamma^v)$, 
or there is only one isomorphism class of $\theta^{v,n}$-stable representations 
(since $\moduli(Q^v,\gamma^v,\theta^{v,n})$ is irreducible). 
Since $\moduli^s(Q,\gamma,\theta)$ is a dense open subvariety of $\moduli(Q^v,\gamma^v,\theta^{v,n})$ by the statements we have proved already, 
in the first case there are infinitely many isomorphism classes of $\theta$-stable representations in $\rep(Q,\gamma)$, whereas in the second case there is a single isomorphism class of $\theta$-stable representations in $\rep(Q,\gamma)$. 
Now let $\tau$ be the $\theta^{v,n}$-semistable representation type of some 
$\xi\in\tilde\moduli$. 
Then by the above considerations, $\tau=(\gamma_1^v,m_1;\ldots;\gamma_q^v,m_q)$ for some $\theta$-stable dimension vectors $\gamma_1,\ldots,\gamma_q$ for $Q$. 
Furthermore, there exists a point $\eta\in\moduli$ whose $\theta$-semistable representation type is $\rho:=(\gamma_1,m_1;\ldots;\gamma_q,m_q)$.   
Note finally that the local quiver settings associated to $\xi$ and $\eta$ are the same, since we have the equality  
$\langle \gamma_i,\gamma_j\rangle_Q=
\langle \gamma_i^v,\gamma_j^v\rangle_{Q^v}$ for all $i,j$. 
Conversely, it is straightforward to show that the local quiver setting associated to $\pi(x)\in\moduli(Q,\alpha,\theta)$ is the same as the 
local quiver setting associated to $\tilde\pi(\iota(x))\in\tilde\moduli$. 
\end{proof} 

\begin{corollary} \label{cor:singularities} For sufficiently large $n$, the 
singularities occuring in the 
moduli space $\moduli(Q,\alpha,\theta)$ are the same 
(up to analytic isomorphism)
as the singularities occuring in  $\moduli(Q,\alpha^v,\theta^{v,n})$. 
\end{corollary}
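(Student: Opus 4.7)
\noindent\textit{Proof proposal.}

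The plan is to split the claim into two opposite inclusions of analytic singularity germs, using Theorem~\ref{thm:embedding} for one direction and the local quiver description of singularities (in the sense of \cite{adriaenssens-lebruyn}) for the other. I read the statement's $\moduli(Q,\alpha^v,\theta^{v,n})$ as a typo for $\moduli(Q^v,\alpha^v,\theta^{v,n})$.

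First, the direction that every analytic germ of singularity of $\moduli(Q,\alpha,\theta)$ occurs in $\moduli(Q^v,\alpha^v,\theta^{v,n})$ is essentially tautological: by Lemma~\ref{lemma:imF}(iii), for sufficiently large $n$, the morphism $F$ realizes the former as a Zariski open subvariety of the latter, so each point of $\moduli(Q,\alpha,\theta)$ has a Zariski (hence analytic) neighborhood in $\moduli(Q^v,\alpha^v,\theta^{v,n})$ which reproduces the same singularity. For the reverse direction, I would invoke the Luna-type slice theorem for quiver moduli: the analytic germ of a quiver moduli space at a point $\xi$ depends only on the local quiver setting $(Q_{\xi},\mu_{\xi})$ — concretely, it is isomorphic to the germ at the origin of the affine quotient $\rep(Q_{\xi},\mu_{\xi})/\!/\gl(\mu_{\xi})$. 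Given any $\xi \in \moduli(Q^v,\alpha^v,\theta^{v,n})$, Lemma~\ref{lemma:local} produces an $\eta \in \moduli(Q,\alpha,\theta)$ with the same local quiver setting, so the slice theorem yields an analytic isomorphism between the germs at $\xi$ and $\eta$.

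The main technical obstacle is that the paper works over an arbitrary algebraically closed field $\field$, while the Luna slice theorem is usually stated in characteristic zero. One would either restrict attention to $\mathrm{char}(\field)=0$ in the corollary, or invoke a positive-characteristic étale-slice variant (e.g. of Bardsley--Richardson); the stabilizers of representatives of closed orbits are products of general linear groups, hence smooth and reductive, which is the hypothesis such variants require. Once this point is settled, combining the two inclusions yields the stated coincidence of singularity germs up to analytic isomorphism.
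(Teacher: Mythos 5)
Your proposal is correct and follows essentially the same route as the paper: the key input is the \'etale local model of the moduli space at $\xi$ given by the local quiver setting (Theorem 4.1 of \cite{adriaenssens-lebruyn}, extended to positive characteristic via \cite{domokos-zubkov2002} and \cite{bardsley-richardson}), combined with the coincidence of local quiver settings from Theorem~\ref{thm:embedding}; your use of the open immersion for one direction is a harmless shortcut, since that direction also follows from the matching of local quiver settings. You are also right that $\moduli(Q,\alpha^v,\theta^{v,n})$ in the statement should read $\moduli(Q^v,\alpha^v,\theta^{v,n})$.
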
 

\begin{proof} There is an \'etale morphism from 
a neighborhood of the image 
${\overline{0}}$ of the zero representation in the algebraic quotient 
$\rep(Q_{\xi},\mu_{\xi})//Gl(\mu_{\xi})$ into 
a neighborhood of $\xi\in\moduli(Q,\alpha,\theta)$  
by Theorem 4.1 in \cite{adriaenssens-lebruyn}  
(in loc. cit. ${\mathrm{char}}(\field)=0$ is assumed and the Luna Slice Theorem \cite{luna} is used; the results extend to positive characteristic by \cite{domokos-zubkov2002}, using \cite{bardsley-richardson}). 
Recall that an \'etale morphism induces isomorphisms of local ring completions. 
Therefore our statement follows from Theorem~\ref{thm:embedding}. 
\end{proof} 

Doubling step-by-step all the vertices in $Q$ one ends up with a bipartite quiver. This construction was used in the literature to reduce the following problems for arbitrary quivers to the case of quivers without oriented cycles: 
computation of the canonical decomposition of dimension vectors in \cite{schofield}, description of generators of the algebra of semi-invariants 
(Theorem 3.2 in \cite{domokos-zubkov2001}), description of 
$\theta$-semistable (stable) dimension vectors (Proposition 6.7 in \cite{derksen-weyman}). Theorem~\ref{thm:embedding} is the moduli space counterpart of these results, accomplishing a proposal attributed to  Le Bruyn on page 374 in \cite{reineke}. 

\begin{example} {\rm (This example shows that although we may double simultaneously all the vertices, we still have to adjust the weight step-by-step going through the vertices, in order to avoid the appearance of singularities of new type.) 
Let $Q$ be the quiver with two vertices $1,2$, and one arrow $a_{ij}$ from $i$ to $j$ for all $(i,j)\in\{1,2\}\times\{1,2\}$. Take the dimension vector  $\alpha:=(1,1)$, and weight $\theta:=(0,0)$. 
Then $\moduli:=\moduli(Q,\alpha,\theta)$ is an affine space of dimension $3$. 
Now double both vertices $1$ and $2$, to get the quiver $\tilde Q$ with four vertices 
$\left(\begin{array}{cc}1_- & 1_+ \\2_- & 2_+\end{array}\right)$, 
an arrow $a_{ij}$ from $i_-$ to $j_+$ for all $(i,j)\in\{1,2\}\times\{1,2\}$, 
and the new arrow $e_i$ from $i_-$ to $i_+$ for $i=1,2$. 
The corresponding dimension vector is $\tilde\alpha:=\left(\begin{array}{cc}1 & 1 \\1 & 1\end{array}\right)$, and consider the weight 
$\tilde\theta:=\left(\begin{array}{cc}-1 & 1 \\-1 & 1\end{array}\right)$. 
Let $y$ denote the point in $\tilde\rep:=\rep(\tilde Q,\tilde\alpha)$ with 
$y(a_{12})=y(a_{21})=1$, $y(a_{11})=y(e_1)=y(a_{22})=y(e_2)=0$. 
Then $y$ is $\tilde\theta$-semistable, and $\tilde\moduli:=\moduli(\tilde Q,\tilde\alpha,\tilde\theta)$ is singular at the point $\xi$ 
corresponding to $y$, as one can see from the local quiver setting of $\xi$ 
(smooth quiver settings were classified in 
\cite{bocklandt}). More explicitly, it is easy to see that 
$\tilde\moduli$ can be identified with the projective variety 
$\{(z_0:z_1:z_2:z_3:z_4)\in{\mathbb{P}}^4\mid z_1z_2-z_3z_4=0\}$ such that 
$\xi$ is identified with the singular point $(1:0:0:0:0)$. 
On the other hand, 
for a point $x\in\rep(Q,\alpha)$ define $\iota(x)\in\tilde\rep$ by $\iota(x)(a_{ij})=x(a_{ij})$ for all $i,j\in\{1,2\}$ and $\iota(x)(e_i)=1$ for $i=1,2$. 
It is easy to see that $\iota$ induces an isomorphism between the affine space $\moduli\cong{\mathbb{A}}^3$ and the dense open subvariety of $\tilde\moduli$ given by $z_4\neq 0$ in the above explicit description of $\tilde\moduli$. 
Finally, we note that replacing the weight $\tilde\theta$ by 
$\sigma:=\left(\begin{array}{cc}-1 & 1 \\-2 & 2\end{array}\right)$ 
one gets a smooth moduli space $\moduli(\tilde Q,\tilde\alpha,\sigma)$. 
}  
\end{example} 

\begin{remark} {\rm (i) As a special case, all the varieties parametrizing semi-simple representations of quivers (cf. \cite{lebruyn-procesi}) can be viewed as open dense subvarieties of projective moduli spaces of bipartite quivers. In particular, the smooth quiver settings classified in \cite{bocklandt} provide examples of smooth projective moduli spaces of representations of quivers. So in a certain sense the quotient spaces of \cite{lebruyn-procesi} 
are brought into the realm of representation spaces of finite dimensional path algebras (i.e. quivers without oriented cycles), despite the fact that the original construction of \cite{lebruyn-procesi}  yields only trivial quotient spaces in the case of finite dimensional path algebras. 

(ii) When $Q$ has no oriented cycles, then $\moduli(Q,\alpha,\theta)$ is a projective variety, hence the morphism induced by $\iota$ in 
Theorem~\ref{thm:embedding} is an isomorphism between 
$\moduli(Q,\alpha,\theta)$ and $\moduli(Q^v,\alpha^v,\theta^{v,n})$ (for sufficiently large $n$). So any moduli space of representations of a quiver  without oriented cycles can be realized as a moduli space of representations of a bipartite quiver. }
\end{remark}


\section{Moduli characterization of tame quivers}
\label{sec:smooth} 

By a {\it connected quiver} we mean a quiver whose underlying graph is connected. The study of representations of a quiver trivially reduces  to the study of representations of the connected components. 

\begin{theorem} \label{thm:main} 
The following are equivalent for a finite connected quiver $Q$: 
\begin{itemize}
\item[(1)] The moduli spaces $\moduli(Q,\alpha,\theta)$ are smooth (possibly  empty)
for all dimension vectors $\alpha$ and weights $\theta$.  
\item[(2)] 
For all $\alpha,\theta$ the moduli space $\moduli(Q,\alpha,\theta)$ is either empty or is a projective space or is an affine space. 
\item[(3)] The underlying graph of $Q$ is Dynkin or extended Dynkin. 
\end{itemize}
\end{theorem}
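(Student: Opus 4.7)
The theorem's equivalence splits into three implications, of which (2)$\Rightarrow$(1) is immediate since affine and projective spaces are smooth over $\field$.

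For (3)$\Rightarrow$(2) I would proceed by case analysis. The Dynkin case follows quickly: by Gabriel's theorem $\rep(Q,\alpha)$ has only finitely many isomorphism classes of representations, hence $\moduli(Q,\alpha,\theta)$ is finite; since it is the image of the irreducible variety $\rep(Q,\alpha)^\sstheta$ under $\pi$, it is irreducible, so empty or a single point. In the extended Dynkin case, I would exploit the trichotomy (preprojective, preinjective, regular) of indecomposables together with the $\mathbb{P}^1$-family structure of indecomposables of the null root $\delta$ in each tube. Combining this with the canonical decomposition of $\alpha$ into Schur roots, one argues that for each $\theta$ at most one family (either rigid or a single tube) contributes to $\rep(Q,\alpha)^\sstheta$, so the moduli is either empty, a point, $\mathrm{Sym}^m \mathbb{P}^1 \cong \mathbb{P}^m$, or---when the quiver contains an oriented cycle, as in $\tilde A_n$---an affine space $\field^m$. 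This reduces to a case-by-case verification over the five extended Dynkin types $\tilde A_n, \tilde D_n, \tilde E_6, \tilde E_7, \tilde E_8$.

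For (1)$\Rightarrow$(3) I argue by contrapositive: assume the underlying graph of $Q$ is wild, and exhibit a singular moduli space. First I reduce to the case of quivers without oriented cycles by iterating Theorem~\ref{thm:embedding} and Corollary~\ref{cor:singularities}; doubling preserves wildness of the underlying graph (the representation category of $Q$ embeds via $\iota$ as a full subcategory of that of $Q^v$), while Corollary~\ref{cor:singularities} transfers any singularity found in the bipartite double back to $Q$. For a wild $Q$ without oriented cycles the Tits form $\langle\cdot,\cdot\rangle_Q$ is indefinite, so a theorem of Kac supplies a Schur root $\beta$ with $\langle\beta,\beta\rangle_Q\leq -1$ together with a weight $\theta$ admitting infinitely many pairwise non-isomorphic $\theta$-stable representations of dimension $\beta$. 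Fixing two such representations $V\not\cong V'$ and setting $\alpha=2\beta$, the point $\xi=\pi(V\oplus V')$ of $\moduli(Q,\alpha,\theta)$ has local quiver setting $(Q_\xi,\mu_\xi)$ on two vertices with $\mu_\xi=(1,1)$, with $1-\langle\beta,\beta\rangle_Q\geq 2$ loops at each vertex and $-\langle\beta,\beta\rangle_Q\geq 1$ arrows in each direction between them. Such a quiver setting lies outside Bocklandt's classification of smooth quiver settings (cited in Corollary~\ref{cor:singularities}), so by the \'etale local model used there, $\moduli(Q,\alpha,\theta)$ is singular at $\xi$.

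The main obstacles are twofold. In (3)$\Rightarrow$(2), one must verify in each of the five extended Dynkin families that no choice of $\theta$ forces two distinct tubes simultaneously into the semistable locus---such a configuration would yield a product of projective spaces rather than a single $\mathbb{P}^m$. In (1)$\Rightarrow$(3), one must actually produce the Schur root $\beta$ and weight $\theta$ with the stated features, which ultimately relies on Kac's theorem on dimension vectors of indecomposables and a genericity argument to guarantee at least two non-isomorphic $\theta$-stable representatives of $\beta$.
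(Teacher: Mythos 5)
Your argument for $(1)\Rightarrow(3)$ breaks down at its key computational step. With $\alpha=2\beta$ and $\xi=\pi(V\oplus V')$ for two non-isomorphic $\theta$-stables $V,V'$ of dimension $\beta$, the local quiver setting has two vertices, dimension vector $(1,1)$, $1-\langle\beta,\beta\rangle$ loops at each vertex and $-\langle\beta,\beta\rangle$ arrows in each direction. If $\langle\beta,\beta\rangle=-1$ (which your hypothesis $\langle\beta,\beta\rangle\le-1$ allows), this is the setting with two loops at each vertex and a single arrow each way in dimension $(1,1)$: the torus $\gl_1\times\gl_1$ acts trivially on the four loop coordinates and the only further invariant is the product $xy$ of the two connecting arrows, so the invariant ring is a polynomial ring in five variables and the quotient is $\mathbb{A}^5$ --- a \emph{smooth} setting, squarely inside Bocklandt's list. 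So your $\xi$ can be a smooth point and the contradiction evaporates. (The argument does go through if $\langle\beta,\beta\rangle\le -2$, since then the $2\times 2$ determinantal relation among the products $x_iy_j$ appears, but you would need to justify finding a Schur root with self-pairing $\le -2$.) The paper sidesteps exactly this trap by taking \emph{three} copies of a \emph{single} stable representation $V$ of a Schur root $\gamma$ with $\langle\gamma,\gamma\rangle<0$: the local quiver is then one vertex with $1-\langle\gamma,\gamma\rangle\ge 2$ loops in dimension $3$, and the invariant ring of $m\ge 2$ triples of $3\times3$ matrices under simultaneous conjugation is known not to be polynomial (Le Bruyn--Teranishi; note that dimension $2$, i.e.\ $V\oplus V$, would again fail, since invariants of pairs of $2\times2$ matrices form a polynomial ring). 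This also removes your need to produce two non-isomorphic stables of the same dimension vector.

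Two further points. Your preliminary reduction to acyclic quivers via doubling is both unnecessary and unjustified as stated: Corollary~\ref{cor:singularities} only identifies the singularities of $\moduli(Q,\alpha,\theta)$ with those of the specific spaces $\moduli(Q^v,\alpha^v,\theta^{v,n})$, so a singular moduli space found for the bipartite double with an arbitrary dimension vector and weight does not transfer back to $Q$; the paper's argument for $(1)\Rightarrow(3)$ works directly on $Q$, oriented cycles and all. For $(3)\Rightarrow(2)$, your Dynkin case matches the paper, but the extended Dynkin case is only a sketch, and the claim that ``at most one family (either rigid or a single tube) contributes'' is not right as stated --- for a regular dimension vector the whole $\mathbb{P}^1$ of tubes contributes, which is precisely why one gets $\mathbb{P}^m$; the paper instead quotes Corollary 7.3 of \cite{domokos-lenzing} (nonempty moduli spaces of tame concealed-canonical algebras are projective spaces) and handles the cyclic $\widetilde{A}_r$ case by doubling a vertex of minimal dimension and identifying the image of $F$ with a standard affine chart via Theorem 6.1 of loc.\ cit. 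That step is a genuinely nontrivial input that your outline does not supply.
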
 

\begin{proof} 
The implication $(2)\implies (1)$ is trivial. 

$(1)\implies (3)$: (The argument is a generalization of an example from \cite{adriaenssens-lebruyn}, 
and part of it appears in \cite{reineke}.) 
Recall the Ringel bilinear form on $\mz^{Q_0}$ defined by 
$\langle \alpha,\beta\rangle=\sum_{i\in Q_0}\alpha(i)\beta(i)
-\sum_{a\in Q_1}\alpha(a_-)\beta(a_+)$. 
A dimension vector $\alpha$ is a Schur root (cf. \cite{kac}) if the generic point in $\rep(Q,\alpha)$ corresponds to an indecomposable representation of $Q$. 
Suppose that $Q$ is not Dynkin or extended Dynkin. Then there exists a Schur root $\gamma$ 
with $\langle \gamma,\gamma\rangle <0$ (this follows for example from 
the representation theoretic interpretation of the Ringel form, and Lemma 1.3 and Corollary 2.7 in \cite{kraft-riedtmann}). There exists a weight 
$\theta$ such that there is a $\theta$-stable point in $\rep(Q,\gamma)$ 
(see Theorem 6.1 in \cite{schofield} for an explicit $\theta$, or 
Proposition 4.4 in \cite{king}). Denote by $V$ a representation of $Q$ corresponding to a $\theta$-stable point in $\rep(Q,\alpha)$. 
Then the $3\gamma$-dimensional representation $W:=V\oplus V\oplus V$ is 
$\theta$-semistable. Let $y\in \rep(Q,3\gamma)$ be a point corresponding 
to $W$, so $y\in\rep(Q,3\gamma)^\sstheta$, write $\xi:=\pi(Q,3\gamma,\theta)(y)$. 
By Proposition 4.2 in \cite{adriaenssens-lebruyn}, the point $\xi$ is smooth in 
$\moduli(Q,3\gamma,\theta)$ if and only if the ring of invariants of the local quiver setting of $\xi$ is a polynomial ring (this is proved in loc. cit. under the assumption that ${\mathrm{char}}(\field)=0$ using the Luna Slice Theorem \cite{luna}; 
the results extend to positive characteristic by \cite{domokos-zubkov2002}, using \cite{bardsley-richardson}). 
The local quiver setting of $\xi$ is the one-vertex quiver with 
$1-\langle \gamma,\gamma\rangle\geq 2$ loops and dimension $3$. 
It is well known that the ring of conjugation invariants of $m$-tuples of $3\times 3$ matrices with $m\geq 2$ is not a polynomial ring 
(see \cite{lebruyn-teranishi} for the case ${\mathrm{char}}(\field)=0$, and 
\cite{domokos-kuzmin-zubkov} for positive characteristic). 
Consequently, $\moduli(Q,3\gamma,\theta)$ is singular at its point corresponding to $W$. 

$(3)\implies (2)$: If $Q$ is a Dynkin quiver, then $\rep(Q,\alpha)$ contains a dense open orbit, hence a moduli space $\moduli(Q,\alpha,\theta)$ is either a single point or is empty. 
If $Q$ is extended Dynkin and contains no oriented cycles, then its path algebra is a tame concealed-canonical algebra, and as a special case of a more general result, we get from 
Corollary 7.3 in \cite{domokos-lenzing} that any non-empty moduli space $\moduli(Q,\alpha,\theta)$ is isomorphic to a projective space. If $Q$ is a tame quiver that contains oriented cycles, then the underlying graph of $Q$ is $\widetilde{A}_r$ for some $r\in\mn_0$, 
with the cyclic orientation (i.e. $Q$ has $r+1$ vertices $0,1,\ldots,r$, with an arrow from $0$ to $1$, $1$ to $2$, $2$ to $3$, etc., $r-1$ to $r$, and $r$ to $0$). Take a dimension vector $\alpha$ and weight $\theta$, and apply Theorem~\ref{thm:embedding} with a vertex $v$ with 
$\alpha(v)$ minimal possible. Then the underlying graph of $Q^v$ is $\widetilde{A}_{r+1}$, with a path of length $r+1$ from $v_-$ to $v_+$,  
plus the arrow $e$ from $v_-$ to $v_+$. Choose $n$ as in Theorem~\ref{thm:embedding}. As we pointed out above, $\moduli(Q^v,\alpha^v,\theta^{v,n})$ is a projective space. This shows already the smoothness of $\moduli(Q,\alpha,\theta)$ by Theorem~\ref{thm:embedding}. 
Moreover, the image of the embedding $F$ from the proof of Theorem~\ref{thm:embedding} is the non-zero locus of one of the natural homogeneous coordinates on $\moduli(Q^v,\alpha^v,\theta^{v,n})$ (constructed as the projective spectrum of an algebra spanned by relative invariants) by Theorem 6.1 in \cite{domokos-lenzing}, implying that $\moduli(Q,\alpha,\theta)$ is an affine space. 
\end{proof} 

\begin{remark}\label{remark:dynkinversuseuclidean} 
{\rm Dynkin and extended Dynkin quivers are distunguished by the possible dimensions of their moduli  spaces: whereas any non-empty moduli space of a Dynkin quiver is a single point, an extended Dynkin quiver has a $d$-dimensional moduli space for all non-negative integers $d$ (see for example 
\cite{domokos-lenzing}). Moreover, the above proof shows that a quiver which is neither Dynkin nor extended Dynkin has a singular moduli space of arbitrarily large dimension.  }
\end{remark}


 \end{document}